% ------------------------------------------------------------------------
% bjourdoc.tex for birkjour.cls*******************************************
% ------------------------------------------------------------------------
%%%%%%%%%%%%%%%%%%%%%%%%%%%%%%%%%%%%%%%%%%%%%%%%%%%%%%%%%%%%%%%%%%%%%%%%%%

\documentclass{amsart}
\usepackage{color}
\usepackage{mathabx}
\usepackage{mathrsfs}
%
%
% THEOREM Environments (Examples)-----------------------------------------
%
 \newtheorem{thm}{Theorem}[section]
 \newtheorem{cor}[thm]{Corollary}

 \theoremstyle{definition}
 
 \theoremstyle{remark}

 \numberwithin{equation}{section}

\newcommand{\C}{\mathbb{C}}
\newcommand{\R}{\mathbb{R}}
\newcommand{\N}{\mathbb{N}}

\begin{document}

%-------------------------------------------------------------------------
% editorial commands: to be inserted by the editorial office
%
%\firstpage{1} \volume{228} \Copyrightyear{2004} \DOI{003-0001}
%
%
%\seriesextra{Just an add-on}
%\seriesextraline{This is the Concrete Title of this Book\br H.E. R and S.T.C. W, Eds.}
%
% for journals:
%
%\firstpage{1}
%\issuenumber{1}
%\Volumeandyear{1 (2004)}
%\Copyrightyear{2004}
%\DOI{003-xxxx-y}
%\Signet
%\commby{inhouse}
%\submitted{March 14, 2003}
%\received{March 16, 2000}
%\revised{June 1, 2000}
%\accepted{July 22, 2000}
%
%
%
%---------------------------------------------------------------------------
%Insert here the title, affiliations and abstract:
%

\title[Moment functions ...]{Moment functions and exponential\\ monomials on commutative\\ hypergroups}
%----------Author 1
% dopisac nawias kwadratowy przy pierwszym autorze

\author{\.Zywilla Fechner }
\address{Institute of Mathematics\\
				 {\L}\'{o}d\'{z}  University of Technology\\
				90-924 {\L}\'{o}d\'{z} \\
				ul. W\'{o}l\-cza\'{n}\-ska 215, Poland 
        \\ ORCID 0000-0001-7412-6544}
\email{zfechner@gmail.com }

\vbox{\author{Eszter Gselmann}
\address{University of Debrecen\\
 H-4002 Debrecen\\ P.O.Box: 400, Hungary}
\email{gselmann@science.unideb.hu}
}

%-----------Author 3
\author{L\'{a}szl\'{o} Sz\'{e}kelyhidi}
\address{University of Debrecen\\
 H-4002 Debrecen\\ P.O.Box: 400, Hungary
\\ ORCID: 0000-0001-8078-6426}
\email{szekely@science.unideb.hu, lszekelyhidi@gmail.com}

\thanks{
Project no.~K134191 supporting E.~Gselmann and L.~Sz\'{e}kelyhidi, has been implemented by the support provided from the National Research, Development and Innovation Fund of Hungary, financed under the K\_20 funding scheme.
The research of E.~Gselmann has partially been carried out with the help of the project 2019-2.1.11-T\'{E}T-2019-00049,
which has been implemented by the support provided from the National Research, Development
and Innovation Fund of Hungary. 
}

%----------classification, keywords, date
\subjclass{Primary 39B52, 39B72; Secondary 43A45, 43A70}

\keywords{moment function, moment sequence, generalized exponential polynomial, spectral analysis and synthesis, varieties}

\date{\today}
%----------additions
\dedicatory{Dedicated to the 80\textsuperscript{th} birthday of Professor Ludwig Reich. }
%%% ----------------------------------------------------------------------

\begin{abstract}
The purpose of this paper is to prove that 
if on a commutative hypergroup an exponential monomial has the property that the linear subspace of all sine functions in its variety is one dimensional, then this exponential monomial is a linear combination of generalized moment functions.
\end{abstract}

%%% ----------------------------------------------------------------------
\maketitle
%%% ----------------------------------------------------------------------
%\tableofcontents

\section{Introduction}

A {\it hypergroup} is a locally compact Hausdorff space $X$ equipped with an involution and a convolution operation defined on the space of all bounded complex regular measures on $X$. For the formal definition, historical background and basic facts about hypergroups we refer to \cite{BlH95}. In this paper $X$ denotes a locally compact hypergroup with identity element $o$, involution $\widecheck{}$, 
and convolution $*$. In fact, the quadruple $(X,o,{\widecheck{}}\,,*)$ is what we should call a hypergroup, but for the sake of simplicity we shall call $X$ a hypergroup. In this paper we shall consider commutative hypergroups only, hence we always suppose that $X$ is a locally compact commutative hypergroup.
\vskip.2cm

Given $x$ in $X$ we denote the point mass with support the singleton $\{x\}$ by $\delta_x$ which is a probability measure on $X$, and so is $\delta_x*\delta_y$ whenever $x,y$ are in $X$. For a continuous function $h\colon X\to \mathbb{C}$ the integral
$$ 
\int_X h(t)d(\delta_x*\delta_y)(t)
$$ 
will be denoted by $h(x*y)$. Clearly, $h(x*y)$ is the mathematical expectation of the random variable $h$ on the probability space $(X,\mathcal B, \delta_x*\delta_y)$, $\mathcal B$ being the $\sigma$-algebra of all Borel subsets of $X$. Given $y$ in $X$ the function $x\mapsto h(x*y)$ is the {\it translate} of $h$ by $y$. A comprehensive monograph on the subject is \cite{Sze12}. 
\vskip.2cm

A set of continuous complex valued functions on $X$ is called {\it translation invariant}, if it contains all translates of its elements. A linear translation invariant subspace of all continuous complex valued functions is called a {\it variety}, if it is closed with respect to uniform convergence on compact sets. The smallest variety containing the given function $h$ is called the {\it variety of $h$}, and is denoted by $\tau(h)$. Clearly, it is the intersection of all varieties including $h$. A continuous complex valued function is called an {\it exponential polynomial}, if its variety is finite dimensional. The simplest nonzero exponential polynomial is the one having one dimensional variety: it consists of all constant multiples of a nonzero continuous function. If we normalize that function by taking $1$ at $o$ then we have the concept of an exponential. Recall that $m$ is an {\it exponential} on $X$ if it is a non-identically zero continuous complex-valued function  satisfying $m(x*y)=m(x) m(y)$ for 
each $x,y$ in $X$ and $s$ is an {\it $m$-sine function} on $X$ if it is a continuous complex-valued function fulfilling $s(x*y)=s(x)m(y)+m(x)s(y)$ for each $x,y$ in $X$.
\vskip.2cm

By the commutativity of the hypergroup every nonzero finite dimensional variety contains an exponential. An exponential polynomial is called an {\it $m$-exponential monomial} if its variety contains only the exponential $m$. Clearly, $m$ is an $m$-exponential monomial. We define the {\it degree} of exponential monomials as follows. Exponential monomials having one dimensional variety have degree $0$, and the degree of an exponential monomial $\varphi$ is $n\geq 1$, if the degree of the exponential monomial $x\mapsto \varphi(x*y)-m(y)\varphi(x)$ is $n-1$. For instance, nonzero $m$-sine functions have degree $1$.
\vskip.2cm

For any nonnegative integer $N$ the continuous function $\varphi\colon X \to \C$  is called a \textit{generalized moment function of order $N$}, if there exist complex valued continuous functions $\varphi_k\colon X \to \C$ such that $\varphi_N=\varphi$ and 
\begin{equation*}
\varphi_k(x*y)=\sum_{j=0}^k {k \choose j}\varphi_j(x)\varphi_{k-j}(y)
\label{eq:Moment}
\end{equation*}
holds for all $k=0,1,\dots, N$ and for all $x,y$ in $X$. We say that the functions $(\varphi_k)_{k\in\left\{0,1,\dots,N \right\}}$
form a \textit{generalized moment function sequence of order $N$}. For the sake of simplicity, in this paper we shall omit the adjective "generalized" and we refer to moment functions and moment function sequences. We note that in \cite{FecGseSze20}, a more general concept of moment function sequences was introduced. 
\vskip.2cm

Observe that $\varphi_0$ is an exponential on the hypergroup $X$. In this case we say that $\varphi_0$ \textit{generates the given moment function sequence of order $N$}, and that the moment functions in this sequence {\it correspond to $\varphi_0$}. Clearly, a moment function of order $1$ corresponding to the exponential $m$ is an $m$-sine function. Given the exponential $m$, all $m$-sine functions form a linear space.
\vskip.2cm

Important examples for exponential monomials are provided by the  moment functions. Clearly, every moment function corresponding to the exponential $m$ is an $m$-exponential monomial. In particular, if the order of a generalized moment function is $N$, then it is an exponential monomial of degree at most $N$.
\vskip.2cm

Exponential monomials are the basic building blocks of spectral synthesis. We say that a variety is {\it synthesizable} if all exponential monomials in the variety span a dense subspace. We say that {\it spectral synthesis holds for} a variety if every subvariety of it is synthesisable. If every variety on $X$ is  synthesisable, then we say that {\it spectral synthesis holds on $X$}. Clearly, on every commutative hypergroup, spectral synthesis holds for finite dimensional varieties. 

\section{The main result}

The above notions suggest that generalized moment functions may play a fundamental role in the theory of spectral analysis and spectral synthesis on commutative hypergroups. In our former paper \cite{FecGseSze20}, we described generalized moment functions on commutative groups using Bell polynomials, even in the higher rank case. In fact, the notion of exponential monomials is not easy to handle, compared to that of generalized moment functions: the functional equations characterizing generalized moment functions are more convenient than those for exponential monomials. Therefore it might be fruitful to know in which situations can exponential monomials be expressed in terms of generalized moment functions. In this work we initiate the study of this problem on commutative hypergroups. The statement below is the first step towards this area.

\begin{thm}\label{main}
Let $X$ be a commutative hypergroup with identity $o$. Let \hbox{$m:X\to\C$} be an exponential, and $\varphi:X\to\C$ an $m$-exponential monomial. If the linear subspace of the variety $\tau(\varphi)$ of $\varphi$ consisting of all $m$-sine functions is one dimensional, then $\tau(\varphi)$ is generated by generalized moment functions.
\end{thm}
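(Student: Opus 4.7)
My strategy is to build, along the degree filtration of the variety $V := \tau(\varphi)$, a moment function sequence whose elements span $V$. Since $\varphi$ is an exponential polynomial, $V$ is finite-dimensional, and since its only exponential is $m$, every nonzero $f \in V$ is an $m$-exponential monomial of a well-defined degree. Setting $V_k := \{f \in V : \deg f \leq k\}$, I obtain an ascending chain $V_0 \subseteq V_1 \subseteq \cdots$ that stabilizes at some $V_N = V$. A short direct computation shows $V_0 = \C\cdot m$, and that every $f \in V_1$ splits as $f(o)\,m + s$ with $s$ an $m$-sine function; thus $V_1 = \C\cdot m \oplus S$, where $S$ is the $m$-sine subspace of $V$. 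By hypothesis $\dim S = 1$; fix a generator $s$, and set $\varphi_0 := m$ and $\varphi_1 := s$, which together form a moment sequence of order $1$.

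The inductive step simultaneously establishes (i) $\dim V_{k+1}/V_k \leq 1$, and (ii) whenever $V_{k+1} \neq V_k$, there exists $\varphi_{k+1} \in V_{k+1} \setminus V_k$ extending $(\varphi_0,\dots,\varphi_k)$ to a moment sequence of order $k+1$. For any $f \in V_{k+1}$, the function $D_y f := T_y f - m(y) f$ lies in $V_k = \mathrm{span}(\varphi_0,\dots,\varphi_k)$, so there are unique functions $a_0,\dots,a_k$ with
\[
T_y f = \sum_{j=0}^{k} a_j(y)\,\varphi_j + m(y)\,f.
\]
Applying the identity $T_y T_z = T_z T_y$ and expanding via the inductively available moment identities $T_y\varphi_j = \sum_{i\leq j}\binom{j}{i}\varphi_{j-i}(y)\,\varphi_i$, equating coefficients of each $\varphi_i$ produces a system of functional equations in $y,z$. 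Comparing the $\varphi_{k-1}$-coefficients gives $a_k(y)\,\varphi_1(z) = a_k(z)\,\varphi_1(y)$, so $a_k$ is a scalar multiple of $\varphi_1 = s$. This proves (i): the assignment $\bar f \mapsto a_k$ is a well-defined linear map $V_{k+1}/V_k \to \C\cdot s$, and it is injective because $a_k \equiv 0$ implies $D_y f \in V_{k-1}$ for all $y$, hence $f \in V_k$. When $\bar f \neq 0$ the scalar is nonzero; rescale $f$ so that $a_k = (k+1)\,\varphi_1$.

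Descending through the equations, the coefficient comparison at level $i$ determines $a_{i+1}$ as $\binom{k+1}{i+1}\,\varphi_{k-i}$ plus a linear combination of $\varphi_1,\dots,\varphi_{k-i-1}$. A direct computation of the transformation rule under $f \mapsto f + \beta\,\varphi_p$ (with $p\leq k$) shows that $a_j$ for $j<p$ shifts by $\beta\binom{p}{j}\varphi_{p-j}$ while $a_j$ for $j\geq p$ is unchanged. Processing the coefficients from $j=k-1$ downward and absorbing each residual $\varphi_q$-term in $a_j$ by the modification with $p = j+q \leq k$, one arrives at $a_j(y) = \binom{k+1}{j}\varphi_{k+1-j}(y)$ for every $j$. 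Setting $\varphi_{k+1} := f$ then gives the moment identity of order $k+1$, and the chain terminates at $V_N = V$ with $\varphi_0,\dots,\varphi_N$ a moment sequence generating $V$.

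The main technical obstacle is the descending coefficient analysis: one must verify that at each stage the residual freedom in $a_j$ lies precisely in $\mathrm{span}(\varphi_1,\dots,\varphi_{k-j})$ and that the absorbing modifications do not disturb coefficients already normalized. The hypothesis $\dim S = 1$ enters essentially at the topmost step, collapsing $a_k$ to a single scalar parameter; it is this collapse that starts the cascade and prevents $V$ from requiring several moment sequences to generate it.
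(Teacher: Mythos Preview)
Your approach is close in spirit to the paper's but organised differently, and in one respect more transparent. The paper fixes at the outset a basis $\varphi_0,\dots,\varphi_n$ of $\tau(\varphi)$ with weakly increasing degrees, encodes the translation action in an upper-triangular matrix $C$ satisfying $C(x*y)=C(x)C(y)$, argues (by induction on $\dim\tau(\varphi)$, with the cases $n\le 4$ done by hand) that every superdiagonal entry $\alpha_{i,i+1}$ is nonzero, and then rescales the first row of $C$ to obtain the moment sequence. You instead climb the degree filtration $V_0\subset V_1\subset\cdots$ and build the moment sequence one level at a time. Your proof that $\dim V_{k+1}/V_k\le 1$ makes explicit a fact the paper uses only implicitly (it is exactly what forces the matrix $C$ to be upper-triangular with one-dimensional superdiagonal), and your normalisation by successive modifications $f\mapsto f+\beta\varphi_p$ replaces the paper's rather compressed closing computation with the $c_{i,j}$.

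There is, however, a small but genuine gap at the bottom of your descending normalisation. At level $j=0$ you need $a_0=f$, and after the levels $j\ge 1$ have been normalised the residual $a_0-f$ is an $m$-sine function $\gamma\,\varphi_1$. Your absorption rule cannot remove it: the modification $f\mapsto f+\beta\varphi_1$ shifts both $f$ and $a_0$ by $\beta\varphi_1$, so the difference $a_0-f$ is invariant and no choice of $\beta$ kills $\gamma$. The fix is to invoke commutativity once more. Writing $f(x*y)=f(y*x)$ with $a_j=\binom{k+1}{j}\varphi_{k+1-j}$ for $j\ge 1$ and $a_0=f+\gamma\varphi_1$, all the binomial terms pair off symmetrically and one is left with
\[
\gamma\bigl(\varphi_1(y)m(x)-\varphi_1(x)m(y)\bigr)=0,
\]
whence $\gamma=0$ since $m$ and $\varphi_1$ are linearly independent. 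With this amendment your argument is complete and yields the same conclusion as the paper's matrix computation.
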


\begin{proof}
Suppose that $\varphi_0,\varphi_1,\dots,\varphi_n$ is a basis of $\tau(\varphi)$, and $\varphi_0=m$, $\varphi_1=s$ is an $m$-sine function, $\varphi_n=\varphi$, and the degrees of these basis functions are increasing with respect to their subscripts. In other words, we suppose that the mapping $k\mapsto \mathrm{deg}\varphi_{k}$ is increasing.
 \vskip.2cm

Then we can write for $k=1,2,\dots,n+1$:
\begin{equation}
\varphi_{n+1-k}(x*y)=\sum_{i=1}^{n+1} c_{k,i}(y)\varphi_{n+1-i}(x)
\end{equation}
for each $x,y$ in $X$. As the function $k\mapsto \deg \varphi_k$ is increasing, it follows that the matrix $c_{k,i}(y)$ is upper triangular for each $x$ (i.e. $c_{k,i}=0$ for $k>i$), and $c_{i,i}=m$ for $i=1,2,\dots,n+1$. Further
$$
\varphi_{n+1-k}\bigl((x*y)*z\bigr)=\sum_{i=1}^{n+1} c_{k,i}(z)\varphi_{n+1-i}(x*y)=
$$
$$
\sum_{i=1}^{n+1} \sum_{j=1}^{n+1} c_{k,i}(z)c_{i,j}(y)\varphi_{n+1-j}(x),
$$
and
$$
\varphi_{n+1-k}\bigl(x*(y*z)\bigr)=\sum_{j=1}^{n+1} c_{k,j}(y*z)\varphi_{n+1-j}(x)=\sum_{j=1}^{n+1} c_{k,j}(z*y)\varphi_{n+1-j}(x).
$$
Hence, by associativity 
$$
c_{k,j}(z*y)=\sum_{i=1}^{n+1}c_{k,i}(z)c_{i,j}(y)
$$
for each $y,z$ in $X$. If $C:X\mapsto \C^{(n+1)(n+1)}$ is the matrix function defined by $C(x)=\bigl(c_{i,j}(x)\bigr)$, then we have
$$
C(x*y)=C(x)C(y),\enskip\text{and}\enskip C(o)=I,
$$
where $I$ is the $(n+1)\times (n+1)$ identity matrix. By 
$$
c_{i,i+1}(x*y)=m(x)c_{i,i+1}(y)+c_{i,i+1}(x)m(y),
$$
it follows that $c_{i,i+1}$ is an $m$-sine function for each $i=1,2,\dots,n$.
\vskip.2cm

We prove the statement by induction on the dimension $n$ of $\tau(\varphi)$. First we consider the cases  $n=1,2,3,4$ separately and then we prove the statement by induction on $n\geq 4$. 
\vskip.2cm

For $n=1$ the statement is trivial, since a one dimensional variety consists of the constant multiples of an exponential, which is a generalized moment function.
\vskip.2cm

For $n=2$ the statement is obvious, because a two dimensional variety consists of the linear combinations of an exponential $m$ and an $m$-sine function, which are generalized moment functions.
\vskip.2cm

For $n=3$ we have the system of equations
\begin{eqnarray*}
\varphi_2(x*y)&=&c_{1,1}(y)\varphi_2(x)+c_{1,2}(y)\varphi_1(x)+c_{1,3}(y)\varphi_0(x)\\
\varphi_1(x*y)&=&c_{2,1}(y)\varphi_2(x)+c_{2,2}(y)\varphi_1(x)+c_{2,3}(y)\varphi_0(x)\\
\varphi_0(x*y)&=&c_{3,1}(y)\varphi_2(x)+c_{3,2}(y)\varphi_1(x)+c_{3,3}(y)\varphi_0(x),
\end{eqnarray*}
where the $c$'s are continuous complex valued functions in $\tau(\varphi)$.  Clearly, $c_{2,1}, c_{3,1}, c_{3,2}$ are zero, and we have $c_{1,1}=c_{2,2}=c_{3,3}=m$. Hence the above system can be written as
\begin{eqnarray*}
 	\varphi_2(x*y)&=&c_{1,1}(y)\varphi_2(x)+c_{1,2}(y)\varphi_1(x)+c_{1,3}(y)\varphi_0(x)\\
 	\varphi_1(x*y)&=&c_{2,2}(y)\varphi_1(x)+c_{2,3}(y)\varphi_0(x)\\
 	\varphi_0(x*y)&=&c_{3,3}(y)\varphi_0(x).
 \end{eqnarray*}
 If $C(x)=\bigl(c_{i,j}(x)\bigr)$, then we have $C(x*y)=C(x)C(y)$, and we can write
 $$
 C=
 \begin{pmatrix}
m&c_{1,2}&c_{1,3}\\
0&m&c_{2,3}\\
0&0&m
 \end{pmatrix}.
$$
 It follows that $c_{1,2}, c_{2,3}$ are $m$-sine functions, hence $c_{1,2}=\alpha_{1,2}s$, $c_{2,3}=\alpha_{2,3}s$. Then $C(x)$ has the following form:
  $$
 C=
 \begin{pmatrix}
 	m&\alpha_{1,2}s&c_{1,3}\\
 	0&m&\alpha_{2,3}s\\
 	0&0&m
 \end{pmatrix}.
 $$
As the first row of $C$ generates $\tau(\varphi)$, hence $m, c_{1,2}, c_{1,3}$ are linearly independent. It follows $\alpha_{1,2}\ne 0$. By the equation for $\varphi_1(x*y)$ above, it follows $c_{2,3}=s$, hence $\alpha_{2,3}=1\ne 0$. We have
 $$
 c_{1,3}(x*y)=m(x)c_{1,3}(y)+\alpha_{1,2} \alpha_{2,3} s(x)s(y)+c_{1,3}(x)m(y),
 $$
 and we conclude that $c_{1,1}, \frac{1}{\alpha_{1,2}}c_{1,2}, \frac{2}{\alpha_{1,2} \alpha_{2,3}}c_{1,3}$ form a generalized moment function sequence. This proves our statement for $n=3$.
 \vskip.2cm
 
Now we prove the statement for $n=4$. In that case the above notation will be modified as 
\begin{eqnarray*}
	\varphi_3(x*y)&=&c_{1,1}(y)\varphi_3(x)+c_{1,2}(y)\varphi_2(x)+c_{1,3}(y)\varphi_1(x)+c_{1,4}(x)\varphi_0(x)\\
	\varphi_2(x*y)&=&c_{2,2}(y)\varphi_2(x)+c_{2,3}(y)\varphi_1(x)+c_{2,4}(y)\varphi_0(x)\\
	\varphi_1(x*y)&=&c_{3,3}(y)\varphi_1(x)+c_{3,4}(y)\varphi_0(x)\\
	\varphi_0(x*y)&=&c_{4,4}(y)\varphi_0(x),
\end{eqnarray*}
and
$$
C=
\begin{pmatrix}
	m&\alpha_{1,2}s&c_{1,3}&c_{1,4}\\
	0&m&\alpha_{2,3}s&c_{2,4}\\
	0&0&m&\alpha_{3,4}s\\
	0&0&0&m
\end{pmatrix},
$$
where the $c$'s are continuous complex valued functions in $\tau(\varphi)$. Here again, the functions in the first row generate $\tau(\varphi)$, hence they are linearly independent. Consequently, $\alpha_{1,2}\ne 0$. On the other hand,
$$
c_{1,3}(x*y)=m(x)c_{1,3}(y)+\alpha_{1,2} \alpha_{2,3} s(x)s(y)+c_{1,3}(x)m(y),
$$
hence $\alpha_{2,3}\ne 0$: otherwise $c_{1,3}$ is an $m$-sine function, a constant multiple of $s$, which contradicts the linear independence of the functions in the first row. Finally, the equation for $\varphi_1(x*y)$ gives that $\alpha_{3,4}\ne 0$. We conclude that the functions
$$
c_{1,1}, \frac{1!}{\alpha_{1,2}}c_{1,2}, \frac{2!}{\alpha_{1,2}\alpha_{2,3}}c_{1,3}, \frac{3!}{\alpha_{1,2}\alpha_{2,3}\alpha_{3,4}}c_{1,4}
$$
form a generalized moment function sequence, which proves our statement for $n=4$.
\vskip.2cm

Suppose that it has been proved if the dimension is not greater than $n\geq 4$, and now we prove it for dimension $n+1$. Our previous notation in this general situation takes the form

\begin{eqnarray*}
	\varphi_n(x*y)&=&c_{1,1}(y)\varphi_n(x)+c_{1,n}(y)\varphi_1(x)+
	\cdots+c_{1,n+1}(y)\varphi_0(x)\\
	&\vdots&\\
	\varphi_2(x*y)&=&c_{n-1,n-1}(y)\varphi_2(x)+c_{n-1,n}(y)\varphi_1(x)+c_{n-1,n+1}(y)\varphi_0(x)\\
	\varphi_1(x*y)&=&c_{n,n}(y)\varphi_1(x)+c_{n,n+1}(y)\varphi_0(x)\\
	\varphi_0(x*y)&=&c_{n+1,n+1}(y)\varphi_0(x),
\end{eqnarray*}
and
$$
C=
\begin{pmatrix}
	m&\alpha_{1,2}s&c_{1,3}&...&c_{1,n}&c_{1,n+1}\\
	0&m&\alpha_{2,3}s&...&c_{2,n}&c_{2,n+1}\\
	\vdots& \vdots &m&...&...&...\\
%	...&...&...&...&\\
	%...&...&...&...&\\
	0&0&0&\ddots&c_{n-2,n}&c_{n-2,n+1}\\
	%0&0&0&...&c_{n-1,n}&c_{n-1,n+1}\\
	0&0&0&...&m&\alpha_{n,n+1}s\\
	0&0&0&...&0&m
\end{pmatrix}.
$$
From the fact that the functions in the first row generate $\tau(\varphi)$ we infer that they are linearly independent. The case of dimension $n$ can be applied for the variety spanned by $\varphi_0,\varphi_1,\dots,\varphi_{n-1}$ to deduce that $\alpha_{1,2}, \alpha_{2,3},\dots,\alpha_{n-1,n}$ are different from zero. Finally, the equation for $\varphi_1(x*y)$ above shows that
$$
\varphi_1(x*y)=c_{n,n}(y)\varphi_1(x)+c_{n,n+1}(y)\varphi_0(x),
$$
that is
$$
s(x*y)=m(y)s(x)+\alpha_{n,n+1}s(y)m(x),
$$
which implies $\alpha_{n,n+1}=1\ne 0$. Consequently, all the $\alpha$'s are nonzero. Then we let $f_0=m$ and  for $k=1,2,\dots,n$
$$
f_k=\frac{k!}{\alpha_{1,2}\alpha_{2,3}\cdots \alpha_{k,k+1}}c_{1,k+1}.
$$ 
We show that $f_0,f_1,\dots,f_n$ form a generalized moment function sequence of order $X$. We have
{\small 
$$
f_k(x*y)=\frac{k!}{\alpha_{1,2}\cdots \alpha_{k,k+1}}c_{1,k+1}(x*y)
=\frac{k!}{\alpha_{1,2}\cdots \alpha_{k,k+1}} \sum_{j=1}^{k+1} c_{1,j}(x) c_{j,k+1}(y)=
$$
}
{\tiny
$$
\frac{k!}{\alpha_{1,2}\cdots \alpha_{k,k+1}} \sum_{j=0}^{k} \frac{\alpha_{1,2}\cdots \alpha_{j,j+1}}{j!}f_j(x) c_{j+1,k+1}(y)
=\sum_{j=0}^{k} \frac{k!}{j!}\frac{1}{\alpha_{j+1,j+2}\cdots \alpha_{k,k+1}}f_j(x) c_{j+1,k+1}(y)=
$$
}
{\tiny
$$
\sum_{j=0}^{k} \frac{k!}{j!}\frac{1}{\alpha_{j+1,j+2}\cdots \alpha_{k-1,k}}f_j(x) \frac{c_{j,k}(y)}{\alpha_{j,j+1}}
=\sum_{j=0}^{k} \frac{k!}{j!}\frac{1}{\alpha_{j,j+1}\cdots \alpha_{k-1,k}}f_j(x) c_{j,k}(y)=
$$
$$
\sum_{j=0}^{k} \frac{k!}{j!}\frac{1}{\alpha_{j,j+1}\cdots \alpha_{k-2,k-1}}f_j(x) \frac{c_{j-1,k-1}(y)}{\alpha_{j-1,j}}
=\sum_{j=0}^{k} \frac{k!}{j!}\frac{1}{\alpha_{j-1,j}\cdots \alpha_{k-2,k-1}}f_j(x) c_{j-1,k-1}(y).
$$
}

Continuing this process we arrive at
{\small
$$
f_k(x*y)=\sum_{j=0}^{k} \frac{k!}{j!}\frac{1}{\alpha_{2,3}\alpha_{3,4}\cdots \alpha_{k-j,k-j+1}}f_j(x)\frac{c_{1,k-j+1}(y)}{\alpha_{1,2}}=
$$
$$
\sum_{j=0}^{k} \frac{k!}{j!(k-j)!}\frac{(k-j)!}{\alpha_{1,2}\alpha_{2,3}\cdots \alpha_{k-j,k-j+1}}f_j(x)c_{1,k-j+1}(y)=
\sum_{j=0}^{k} \binom{k}{j}f_j(x)f_{k-j}(y),
$$
}
which proves the statement.
\end{proof}

One may ask how restrictive is the condition that the sine functions in a variety form a one dimensional linear space. Of course, this requirement is quite restrictive, but still there are large, important classes of commutative hypergroups having this property. This condition means a kind of "one-dimensionality" of the hypergroup. These classes include all polynomial hypergroups in one variable, Sturm--Liouville hypergroups, etc., where, in fact, every finite dimensional variety has this property.  For instance, on polynomial hypergroups we can apply our result as follows.

\begin{cor}
Let $X$ be a polynomial hypergroup associated with the sequence of polynomials $\bigl(P_n\bigr)_{n\in\N}$. Then every complex valued function on $\N$ (i.e. every complex sequence) is the pointwise limit of linear combinations of generalized moment functions on $X$. 
\end{cor}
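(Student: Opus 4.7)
The plan is to combine Theorem \ref{main} with the known spectral synthesis theorem for polynomial hypergroups. First, observe that on a polynomial hypergroup the underlying space is $\N$ equipped with the discrete topology, so every complex sequence is automatically a continuous function, and uniform convergence on compact subsets of $\N$ is exactly pointwise convergence. In particular the full space $\C^{\N}$ is itself a variety, and any translation invariant subspace of it is closed in the pointwise topology if and only if it is a variety in the hypergroup sense.

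Next, I would invoke the result (due to the third author) that spectral synthesis holds on every polynomial hypergroup in one variable: every variety on $X$ is synthesizable, which means that the linear span of the exponential monomials in it is dense. Applied to the variety $\C^{\N}$ itself, this yields that every complex sequence is the pointwise limit of linear combinations of exponential monomials on $X$.

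It then suffices to show that each exponential monomial on $X$ is already a (finite) linear combination of generalized moment functions. Here the structure of polynomial hypergroups enters: for every exponential $m_{z}$ on $X$, the space of $m_{z}$-sine functions is one dimensional (this can be read off the three term recursion defining $(P_{n})$, and is the reason alluded to in the remark preceding the statement). Consequently, if $\varphi$ is any $m_{z}$-exponential monomial, then the one dimensional sine condition of Theorem \ref{main} is automatically satisfied for the finite dimensional variety $\tau(\varphi)$, so $\tau(\varphi)$ is generated by generalized moment functions; in particular $\varphi$ itself is a linear combination of such moment functions.

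Chaining the two approximation steps produces the desired density, proving the corollary. The main obstacle is not in the argument itself but in citing and, if needed, briefly justifying the two nontrivial ingredients: spectral synthesis on one-dimensional polynomial hypergroups, and the one-dimensionality of the $m_{z}$-sine space for every exponential $m_{z}$. Both are standard in the polynomial hypergroup literature, and the rest of the proof is a direct application of Theorem \ref{main}.
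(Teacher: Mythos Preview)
Your proposal is correct and follows essentially the same route as the paper: invoke spectral synthesis on polynomial hypergroups to reduce to exponential monomials, then use the one-dimensionality of the $m$-sine space together with Theorem~\ref{main}. The only cosmetic differences are that the paper works with the variety $\tau(f)$ of a given $f$ rather than with the full space $\C^{\N}$, and it cites explicit results from \cite{Sze12} (the forms $m(n)=P_n(\lambda)$ and $s(n)=cP_n'(\lambda)$) for the one-dimensionality of the sine space, where you appeal to the three-term recurrence.
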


\begin{proof}
By Theorem 6.7. in \cite{Sze12}, spectral synthesis holds on every polynomial hypergroup. This means that, in every variety the exponential polynomials span a dense subspace. If $f:\N\to\C$ is any function, then we can apply this result for the variety $\tau(f)$ of $f$. Consequently, to prove our statement it is enough to show that for each exponential $m$ on $X$, all $m$-sine functions form a one dimensional linear space. Let $m$ be an exponential on $X$. By Theorem 2.2. in \cite{Sze12}, there exists a complex number $\lambda$ such that $m(n)=P_n(\lambda)$ holds for each $n$ in $\N$. On the other hand, by Theorem 2.5. in \cite{Sze12}, every $m$-sine function $s$ on $X$ has the form $s(n)=c P'_n(\lambda)$ with some complex number $c$. It follows that all $m$-sine functions form a one dimensional linear space, hence by Theorem \ref{main}, our statement follows.
\end{proof}

The following result can be obtained on Sturm--Liouville hypergroups. Here $\R_0$ denotes the set of nonnegative reals.

\begin{cor}
Let $X=(\R_0,A)$ be the Sturm--Liouville hypergroup associated with the Sturm--Liouville function $A:\R_0\to \R$. Let $V$ be a synthesizable variety on $X$. Then every function in $V$ is the uniform limit on compact sets of linear combinations of generalized moment functions on $X$. 
\end{cor}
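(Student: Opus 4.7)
The plan is to follow the same template as the proof of the previous corollary about polynomial hypergroups: first invoke synthesizability of $V$ to reduce the problem to exponential monomials, then apply Theorem~\ref{main} by showing that on a Sturm--Liouville hypergroup $X=(\R_0,A)$ the space of $m$-sine functions is always one dimensional for any exponential $m$. The final ingredient is automatic, since Theorem~\ref{main} produces a basis of generalized moment functions for $\tau(\varphi)$, and each such function lies in the original variety $V$ (because $V$ is translation invariant, $\tau(\varphi)\subseteq V$ for every $\varphi\in V$).

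More concretely, I would first fix $f\in V$. Since $V$ is synthesizable, by definition the exponential monomials contained in $V$ span a dense subspace of $V$ with respect to uniform convergence on compact sets. Thus $f$ is the uniform limit on compacta of finite linear combinations $\sum_j \lambda_j \varphi^{(j)}$, where each $\varphi^{(j)}$ is an $m_j$-exponential monomial belonging to $V$. The reduction step is then: if I can show that each such $\varphi^{(j)}$ is itself a linear combination of generalized moment functions (which automatically belong to $V$, since the moment functions are built inside $\tau(\varphi^{(j)})\subseteq V$), then the approximating sums are linear combinations of generalized moment functions, and the conclusion follows.

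To apply Theorem~\ref{main} to each $\varphi^{(j)}$, I need the one-dimensionality of the $m_j$-sine functions inside $\tau(\varphi^{(j)})$; it is enough (and more convenient) to establish the stronger statement that on a Sturm--Liouville hypergroup, for every exponential $m$ on $X$ the whole linear space of $m$-sine functions on $X$ is one dimensional. This is the step that requires some input from the theory of Sturm--Liouville hypergroups: the exponentials on $(\R_0,A)$ are exactly the functions $x\mapsto \ph_\lambda(x)$ obtained as the unique solutions of the associated Sturm--Liouville initial value problem with spectral parameter $\lambda\in\C$, and an $m$-sine function with $m=\ph_\lambda$ satisfies a forced linear differential equation whose solution space, after imposing the sine functional equation $s(x*y)=s(x)m(y)+m(x)s(y)$ and the initial condition $s(o)=0$, is one dimensional and spanned by $\partial_\lambda \ph_\lambda$. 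I would cite the relevant passages of \cite{BlH95} and \cite{Sze12} for this structural fact rather than rederive it.

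The main obstacle, as above, is precisely the verification of one-dimensionality of the space of $m$-sine functions on Sturm--Liouville hypergroups; once this is in hand, Theorem~\ref{main} applied to each $\varphi^{(j)}$ gives that $\tau(\varphi^{(j)})$ is generated by generalized moment functions, so every exponential monomial in $V$ is a linear combination of generalized moment functions, and the density statement of synthesizability then upgrades this to the desired approximation of arbitrary $f\in V$ by linear combinations of generalized moment functions in the topology of uniform convergence on compact sets.
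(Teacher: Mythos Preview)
Your proposal is correct and follows essentially the same route as the paper: reduce via synthesizability to exponential monomials in $V$, then verify that on a Sturm--Liouville hypergroup the space of $m$-sine functions is one dimensional so that Theorem~\ref{main} applies. The paper's proof differs only in execution: it explicitly quotes the ODE characterizations from \cite{Sze12} (Theorems~4.2 and~4.5) with initial conditions $s(0)=s'(0)=0$ and invokes uniqueness of the initial value problem to show every $m$-sine function is a multiple of a fixed $s_0$, rather than identifying the generator as $\partial_\lambda\varphi_\lambda$ and citing the fact.
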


\begin{proof}
Applying a similar argument to that in the previous theorem it is enough to show that for each exponential on $X$, the linear space of all $m$-sine functions in the variety of an arbitrary $m$-exponential monomial is one-dimensional.
\vskip.2cm

By Theorem 4.2. in \cite{Sze12}, the function $m:\R_0\to \C$ is an exponential if and only if it is twice continuously differentiable and there exists a complex number $\lambda$ such that 
\begin{equation}\label{exp}
m''(x)+\frac{A'(x)}{A(x)}m'(x)=\lambda m(x)
\end{equation}
holds for $x>0$, further $m(0)=1$, $m'(0)=0$. Suppose that $m$ satisfies \eqref{exp}. Then, by Theorem 4.5. in \cite{Sze12}, the function $s:\R_0\to \C$ is an $m$-sine function if and only if it is twice continuously differentiable and there exists a complex number $c$ such that 
\begin{equation}\label{sin}
	s''(x)+\frac{A'(x)}{A(x)}s'(x)-\lambda s(x)=c m(x)
\end{equation}
holds for $x>0$, further $s(0)=0$, $s'(0)=0$. Let $s_0$ be the unique twice continuously differentiable function satisfying
\begin{equation}\label{sin0}
	s_0''(x)+\frac{A'(x)}{A(x)}s_0'(x)-\lambda s_0(x)=m(x)
\end{equation}
for $x>0$, and $s_0(0)=0$, $s_0'(0)=0$. It is known that this problem has a unique solution, hence $s_0$ is unique. On the other hand, if $s$ is any $m$-sine function, then there is a unique $c$ such that $s$ satisfies the problem \eqref{sin}. However, $c s_0$ also satisfies \eqref{sin}, hence we infer $s=c s_0$, and the proof is complete.

\end{proof}

\vskip10cm

% ------------------------------------------------------------------------

\begin{thebibliography}{10}
\bibitem{BlH95}
W.~R. Bloom and H.~Heyer,  \textit{Harmonic analysis of probability measures on hypergroups}, de Gruyter Studies in Mathematics, vol.~20, Walter de Gruyter {\&} Co., Berlin, 1995.
\bibitem{FecGseSze20}
{\.Z}.~Fechner, E.~Gselmann and L.~Sz\'{e}kelyhidi, \textit{Moment functions on groups}, arXiv:2010.00208 [math.CA], 2020. 
%\bibitem{SzeVaj10} 
%L.~{S}z{\'e}kelyhidi, L. Vajday,  \textit{Spectral analysis on commutative hypergroups}, Aequationes Math. \textbf{80 (1-2)} (2010), 223--226
\bibitem{Sze12}
L.~{S}z{\'e}kelyhidi,  \textit{Functional Equations on Hypergroups}, World Scientific Publishing Co. Pte. Ltd.,  New Jersey, London, 2013.
\bibitem{Sze13}
L.~{S}z{\'e}kelyhidi, \textit{Exponential polynomials on commutative hypergroups}, Arch. Math (Basel), 2013 \textbf{101 (4)}, 341--347.
%\bibitem{SzeVaj16} 
%L.~{S}z{\'e}kelyhidi, L. Vajday,  \textit{Spectral synthesis on commutative hypergroups}, Annales Univ. Sci. Budapest, Sect. Comp. \textbf{45} (2016), 111--117.
\end{thebibliography}
\end{document}